\theoremstyle{plain}
\newtheorem{thm}{Theorem}[section]
\newtheorem{prop}[thm]{Proposition}
\theoremstyle{definition}
\newtheorem{question}{Question}[section]
\newtheorem{rem}{Remark}[section]
\title{Wolstenholme primes and group determinants \\ of cyclic groups} 
\author{Cid Reyes-Bustos, Naoya Yamaguchi and Yuka Yamaguchi} 
\date{\today}
\begin{document}

\maketitle

\begin{abstract}
  A Wolstenholme prime is a prime number $p \geq 5$ that divides the numerator of the Bernoulli number $B_{p-3}$. A number of
  equivalent definitions for Wolstenholme primes are known, mostly related to congruences of harmonic sums or binomial coefficients.
  In this paper, we introduce an equivalent definition of Wolstelholme primes related the number of terms in the group determinant of cyclic groups, and equivalently, the cardinality of certain sets of restricted partitions.
    
  \textbf{Keywords:} 
  Wolstenholme prime, group determinant, restricted partition, cyclic group, circulant determinant
  
  \textbf{2020 Mathematics Subject Classification:} 
{\it Primary} 11A41, {\it Secondary} 20C15, 11C20, 65F40.
\end{abstract}

\section{Introduction}

In 1862, Wolstenholme proved that for a prime number $p\geq5$, the numerator of the harmonic sum
\begin{align}
  \label{eq:wolsthmH}
  H_{p-1} :=  1 + \frac12 + \frac13 + \cdots + \frac1{p-1}
\end{align}
is divisible by $p^2$. This theorem, known as Wolstenholme theorem, is equivalent to the congruence
\begin{align}
  \label{eq:wolsthm}
  \binom{2p - 1 }{p-1} \equiv 1 \pmod{p^3}.
\end{align}
There is a number of related results and extensions of Wolstenholme theorem, 
we refer the reader to \cite{mestrovic2011wolstenholmes} for an extensive review up to 2012.

It is immediate to verify that the congruence \eqref{eq:wolsthm} in Wolstenholme theorem does not hold in general for
higher powers of $p$. 
For instance, we have
\[
  \binom{2p - 1 }{p-1} \equiv 126 \pmod{p^4}
\]
for $p=5$.  
In fact, a prime number $p$ satisfying the congruence relation
\begin{align}
  \label{eq:wolsprime}
\binom{2p-1}{p-1} \equiv 1 \pmod{p^{4}} 
\end{align}
is called Wolstenholme prime \cite[p.385]{MR1339137}. The only known Wolstenholme primes are $16843$ and $2124679$ \cite[pp.915--916]{MR4437433}, but it is conjectured that the set of Wolstenholme primes is infinite based on arguments using the prime number theorem \cite[p.387]{MR1339137}. We remark that it is also conjectured that no prime number $p$ satisfies a congruence of type \eqref{eq:wolsprime} for $p^5$.

Similar to the case of Wolstenholme theorem, there are a number of equivalent definitions for Wolstenholme primes \cite{mestrovic2011wolstenholmes}. For instance, a  prime number $p$ is a Wolstenholme prime if it satisfies any of 
the following conditions:
\begin{enumerate}
\item[(a)] the numerator of the harmonic sum $H_{p-1}$ is divisible by $p^{3}$ (cf. \eqref{eq:wolsthmH}), 
\item[(b)] the prime $p$ divides the numerator of the Bernoulli number $B_{p-3}$,
\item[(c)] $p$ and $p - 3$ are irregular primes (in this case, $(p,p-3)$ is called an irregular pair).
\end{enumerate}

In this paper, we give a sufficient and necessary condition for a prime $p$ to be Wolstenholme prime related
to number of terms of the group determinant of cyclic groups, and the number of certain restricted partitions. 

Let us recall the definition of group determinant. For a finite group $G$ and a set of variables
$\{x_g\}_{g \in G}$ indexed by the group elements, the determinant
\[
  \Theta(G) := \det{\left( x_{g h^{- 1}} \right)}_{g, h \in G}
\]
is called the group determinant of $G$. The group determinant $\Theta(G)$ actually determines the group itself \cite{MR1062831,MR1123661,MR4227663}, and contains considerable information about the group structure. Notably, it is known that the factorization of $\Theta(G)$ into linear factors is equivalent to the decomposition of the regular representation of the group into irreducible representations. The problem of factorization of the group determinant was originally considered in the XIX century for abelian groups by Dedekind, and by Frobenius in the general case. We refer the reader to \cite{Hawkins} for historical remarks and to \cite{Johnson} for a modern account of the theory of group determinant.

The main result of this paper is that $p$ is a Wolstenholme prime if and only if the congruence
\[
  {\rm N}(\Theta({\rm C}_{p})) \equiv 1 \pmod{p^{3}} 
\]
holds. Here, ${\rm C}_{n}$ is the cyclic group of order $n$ and $N(f)$ denotes the number of terms in a polynomial $f$. We also give a formulation of Wolstenholme theorem in terms of the group determinant of ${\rm C}_{n}$. 

This result is obtained by considering the explicit expression of the number of terms of the group determinant, which is related to a certain type of restricted partitions, which are generalizations of restricted partitions used in combinatorics, for instance, for the Frobenius problem, also known as coin-exchange problem \cite{BeckRobins}. The results of this paper may be interpreted as a group theoretical, and combinatorial, interpretations of the definition of Wolstenholme primes.

\section{Main results}

In order to state the main results, we start by recalling certain results on the number of terms of group determinant and its relation with restricted partitions. 

The group determinant $\Theta({\rm C}_{n})$ of ${\rm C}_{n}$ is given by
\[
    \Theta({\rm C}_{n}) = \prod_{i=0}^{n-1} \left( \sum_{j=0}^{n-1} \zeta_n^{i j} x_j \right),
\]
where $\zeta_n$ is a primitive $n$-th root of unity. This result is equivalent to the fact that the characters of ${\rm C}_{n}$ are given by $\chi_i(j)= \zeta^{i j}_n$ for $j \in {\rm C}_{n}$ and $0 \leq i \leq n - 1$. 
It is also not difficult to verify this result from the properties of circulant matrices.

Next, define the set $\tilde{\Lambda}_{n}^{k}$ to be the set of restricted partitions by
\[
\tilde{\Lambda}_{n}^{k} := \left\{ (\lambda_{1}, \lambda_{2}, \ldots, \lambda_{k n}) \in \mathbb{Z}^{kn} \mid 1 \leq \lambda_{1} \leq \lambda_{2} \leq \cdots \leq \lambda_{kn} \leq n, \quad 
\sum_{i = 1}^{k n} \lambda_{i} \equiv 0 \pmod{n} \right\}.
\]
The set $\tilde{\Lambda}_{n}^{k}$ is finite and the cardinality of $\tilde{\Lambda}_{n}^{k}$ is seen to be
\begin{equation}
  \label{eq:cardLambda}
  \frac{1}{n} \sum_{d \mid n} \binom{d k + d - 1}{d - 1} \varphi \left( \frac{n}{d} \right)
\end{equation}
using the formulas given in \cite{MR1691428} for dimensions of vector spaces of homogeneous invariants of cyclic groups.
Here $\varphi$ denotes the Euler totient function.

The relation of the restricted partitions 
with the number of terms of the group determinant of ${\rm C}_{n}$ is given by the inequality
\begin{equation}
  \label{eq:NGDI}
  {\rm N}(\Theta({\rm C}_{n})^{k}) \leq | \tilde{\Lambda}_{n}^{k} |,
\end{equation}
for $k\geq 1$. 
We refer the reader to \cite{https://doi.org/10.48550/arxiv.2203.14422} for congruences related to $|\tilde{\Lambda}_{n}^{k}|$ and 
${\rm N}(\Theta({\rm C}_{n})^{k})$, and in particular to Remark~3.3 for the proof of the inequality \eqref{eq:NGDI}.

In particular, we note (\cite[Corollary~4.2]{https://doi.org/10.48550/arxiv.2203.14422}) that equality in \eqref{eq:NGDI} is achieved when $n$ is a prime $p$ and $k = 1$, that is,
\begin{align}
  \label{eq:NGDL}
 {\rm N}(\Theta({\rm C}_{p})) = | \tilde{\Lambda}_{p}^{1} |. 
\end{align}

With these preparations, we can state the results of this paper. First, we give the group determinant version of Wolstenholme theorem.

\begin{thm}\label{cor:4}
A prime number $p \geq 5$ satisfies
\begin{align*}
  {\rm N}(\Theta({\rm C}_{p})) \equiv 1 \pmod{p^{2}}.
\end{align*}
\end{thm}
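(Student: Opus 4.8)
The plan is to use the identity \eqref{eq:NGDL}, which reduces the claim to showing $|\tilde{\Lambda}_p^1| \equiv 1 \pmod{p^2}$, and then to evaluate $|\tilde{\Lambda}_p^1|$ via the closed formula \eqref{eq:cardLambda}. For $n = p$ prime and $k = 1$, the divisors $d \mid p$ are just $d = 1$ and $d = p$, so \eqref{eq:cardLambda} collapses to
\[
  |\tilde{\Lambda}_p^1| = \frac{1}{p}\left( \binom{1}{0}\varphi(p) + \binom{2p-1}{p-1}\varphi(1) \right) = \frac{1}{p}\left( (p-1) + \binom{2p-1}{p-1} \right).
\]
Thus the theorem is equivalent to the assertion $(p-1) + \binom{2p-1}{p-1} \equiv p \pmod{p^3}$, i.e. $\binom{2p-1}{p-1} \equiv 1 \pmod{p^3}$.

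The second step is to recognize that this last congruence is exactly Wolstenholme's theorem as stated in \eqref{eq:wolsthm} in the introduction, which holds for every prime $p \geq 5$. So after the bookkeeping above, the result follows immediately by invoking \eqref{eq:wolsthm}. One should double-check the boundary arithmetic: $\varphi(p) = p-1$, $\varphi(1) = 1$, $\binom{dk+d-1}{d-1} = \binom{d+1-1}{d-1} = \binom{d}{d-1} = d$ when $k=1$ (so the $d=1$ term contributes $1 \cdot (p-1)$ and the $d=p$ term contributes $\binom{2p-1}{p-1}$), and that division by $p$ is legitimate because $\binom{2p-1}{p-1} \equiv 1 \equiv 1-p+p \pmod p$ forces $p \mid (p-1) + \binom{2p-1}{p-1}$, consistent with $|\tilde{\Lambda}_p^1|$ being an integer.

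I do not anticipate a genuine obstacle here: the entire content is packaged in the cited formula \eqref{eq:cardLambda} and in classical Wolstenholme. The only place to be careful is ensuring that the reduction modulo $p^3$ of the integer $\frac{1}{p}\big((p-1) + \binom{2p-1}{p-1}\big)$ is correctly matched with the reduction modulo $p^2$ of the value $|\tilde{\Lambda}_p^1|$ itself — that is, that dividing by $p$ shifts the modulus from $p^3$ down to $p^2$ — which is where the hypothesis $p \geq 5$ (via $\binom{2p-1}{p-1}\equiv 1 \pmod{p^3}$) does its work. This same computation, pushed to the stronger congruence $\binom{2p-1}{p-1}\equiv 1 \pmod{p^4}$, will be what yields the Wolstenholme-prime characterization, so it is worth presenting the arithmetic cleanly enough to be reused.
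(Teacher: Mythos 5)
Your proof is correct and follows essentially the same route as the paper: its Proposition~\ref{prop:3}, specialized to $l=k=1$, is exactly your computation $|\tilde{\Lambda}_p^1|=\frac{1}{p}\left((p-1)+\binom{2p-1}{p-1}\right)$ combined with Wolstenholme's congruence $\binom{2p-1}{p-1}\equiv 1\pmod{p^3}$, followed by the identity \eqref{eq:NGDL}. One small slip in your parenthetical check: for $k=1$ the binomial coefficient is $\binom{2d-1}{d-1}$, not $\binom{d}{d-1}=d$; this does not affect your displayed computation, since the $d=1$ and $d=p$ contributions you actually use, namely $(p-1)$ and $\binom{2p-1}{p-1}$, are correct.
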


Next, we give a sufficient and necessary condition for a prime number to be a Wolstenholme prime. Note that in both
results, the exponent in the congruence is smaller than the corresponding results (i.e. \eqref{eq:wolsthm} and \eqref{eq:wolsprime}, respectively).

\begin{thm}\label{thm:1}
A prime number $p$ is a Wolstenholme prime if and only if 
\[
  {\rm N}(\Theta({\rm C}_{p})) \equiv 1 \pmod{p^{3}}
\]
holds. 
\end{thm}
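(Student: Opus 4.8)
The plan is to start from the exact formula \eqref{eq:NGDL}, namely ${\rm N}(\Theta({\rm C}_{p})) = |\tilde{\Lambda}_{p}^{1}|$, and to evaluate $|\tilde{\Lambda}_{p}^{1}|$ explicitly using \eqref{eq:cardLambda} with $n = p$ prime and $k = 1$. Since the only divisors of $p$ are $1$ and $p$, the sum in \eqref{eq:cardLambda} collapses to just two terms, giving
\[
  |\tilde{\Lambda}_{p}^{1}| = \frac{1}{p}\left( \binom{1}{0}\varphi(p) + \binom{2p-1}{p-1}\varphi(1) \right) = \frac{(p-1) + \binom{2p-1}{p-1}}{p}.
\]
So the combinatorial quantity is pinned down in closed form, and the problem is reduced to a purely number-theoretic statement about the central binomial coefficient $\binom{2p-1}{p-1}$.

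Next I would translate the target congruence. We want ${\rm N}(\Theta({\rm C}_{p})) \equiv 1 \pmod{p^{3}}$, i.e. $\frac{(p-1) + \binom{2p-1}{p-1}}{p} \equiv 1 \pmod{p^{3}}$, which (multiplying through by $p$, legitimate since we are comparing integers) is equivalent to $(p-1) + \binom{2p-1}{p-1} \equiv p \pmod{p^{4}}$, that is, $\binom{2p-1}{p-1} \equiv 1 \pmod{p^{4}}$. But this last congruence is exactly the defining condition \eqref{eq:wolsprime} of a Wolstenholme prime. Hence the equivalence is immediate once the bookkeeping with the factor of $p$ is done carefully. The same computation with exponent $3$ in place of $4$ on the binomial side yields Theorem~\ref{cor:4} as a byproduct, since Wolstenholme's theorem \eqref{eq:wolsthm} guarantees $\binom{2p-1}{p-1} \equiv 1 \pmod{p^{3}}$ for all $p \geq 5$.

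The step requiring the most care is the passage between a congruence modulo $p^{3}$ for the rational-looking expression $\frac{(p-1)+\binom{2p-1}{p-1}}{p}$ and a congruence modulo $p^{4}$ for $\binom{2p-1}{p-1}$ itself: one must first verify that $p \mid (p-1) + \binom{2p-1}{p-1}$ so that ${\rm N}(\Theta({\rm C}_{p}))$ is genuinely an integer (this follows from Wolstenholme's theorem, or even just from $\binom{2p-1}{p-1}\equiv 1 \pmod p$), and then argue that for integers $a$ with $p \mid a$ one has $a/p \equiv 1 \pmod{p^{3}} \iff a \equiv p \pmod{p^{4}}$. This is elementary but is the only place where the shift in the exponent between the two theorems and the classical statements is actually produced, so I would present it explicitly rather than leave it to the reader. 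No genuine obstacle is expected beyond this; the substance of the result is entirely carried by the identity \eqref{eq:NGDL} and the cardinality formula \eqref{eq:cardLambda}, both quoted from the earlier literature.
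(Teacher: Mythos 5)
Your proposal is correct, and its core is the same as the paper's: reduce via \eqref{eq:NGDL} and \eqref{eq:cardLambda} to the closed form ${\rm N}(\Theta({\rm C}_{p})) = \frac{1}{p}\bigl(p-1+\binom{2p-1}{p-1}\bigr)$ and compare with the defining congruence \eqref{eq:wolsprime}. The one genuine difference is in how the ``Wolstenholme $\Rightarrow$ congruence'' direction is obtained: the paper routes it through Proposition~\ref{prop:3}, whose proof needs the nontrivial congruence $\binom{k'p^{i}-1}{p^{i}-1}-\binom{k'p^{i-1}-1}{p^{i-1}-1}\equiv 0 \pmod{p^{3i}}$ quoted from the Me\v{s}trovi\'c survey, because that proposition is stated for general $p^{l}$ and $k$. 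You instead observe that for the case at hand the elementary equivalence $N\equiv 1 \pmod{p^{3}} \iff pN\equiv p\pmod{p^{4}} \iff \binom{2p-1}{p-1}\equiv 1\pmod{p^{4}}$ runs in both directions, which makes Theorem~\ref{thm:1} self-contained (and dispenses with the paper's separate check of $p=2,3$, since both sides of the equivalence simply fail there). What you lose relative to the paper is the stronger intermediate statement about $|\tilde{\Lambda}_{p^{l}}^{k}|$ for all $l,k$, which the authors also use to motivate their later questions; what you gain is a shorter, more elementary proof of the theorem itself. Your attention to the integrality of the expression (via $\binom{2p-1}{p-1}\equiv 1\pmod p$) and to the exponent shift from $p^{4}$ to $p^{3}$ is exactly the right bookkeeping.
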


The proofs of the theorems are based on the corresponding version of Wolstenholme theorem for restricted partitions.

\begin{prop} \label{prop:3}
  For prime $p \geq 5$ and $k, l \geq 1$, the congruence
  \[
    | \tilde{\Lambda}_{p^{l}}^{k} | \equiv 1 \pmod{p^{2}}
  \] holds.
  In addition, if $p$ is a Wolstenholme prime, then we have 
  \[
    | \tilde{\Lambda}_{p^{l}}^{1} | \equiv 1 \pmod{p^{3}}.
  \]  
\end{prop}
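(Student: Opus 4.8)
The starting point is the closed-form cardinality formula
\[
  |\tilde{\Lambda}_{n}^{k}| = \frac{1}{n} \sum_{d \mid n} \binom{dk + d - 1}{d - 1} \varphi\!\left(\frac{n}{d}\right).
\]
I would specialize to $n = p^{l}$, where the divisors are $d = p^{j}$ for $0 \le j \le l$, so that
\[
  |\tilde{\Lambda}_{p^{l}}^{k}| = \frac{1}{p^{l}} \sum_{j=0}^{l} \binom{p^{j}k + p^{j} - 1}{p^{j} - 1} \varphi(p^{l-j}).
\]
Using $\varphi(p^{l-j}) = p^{l-j} - p^{l-j-1}$ for $j < l$ and $\varphi(1) = 1$ for $j = l$, the goal is to show this integer is $\equiv 1 \pmod{p^{2}}$ in general, and $\equiv 1 \pmod{p^{3}}$ when $k = 1$ and $p$ is Wolstenholme. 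Equivalently, multiplying through by $p^{l}$, I need
\[
  \sum_{j=0}^{l} \binom{p^{j}k + p^{j} - 1}{p^{j} - 1}\,\varphi(p^{l-j}) \equiv p^{l} \pmod{p^{l+2}}
\]
(and mod $p^{l+3}$ in the Wolstenholme case with $k=1$).

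Progress on such sums is powered by Kummer/Lucas-type congruences for the binomial coefficients $\binom{p^{j}(k+1) - 1}{p^{j} - 1}$. The key term is $j = 0$, which contributes $\binom{k}{0}\varphi(p^{l}) = p^{l} - p^{l-1}$; the $j = 1$ term contributes $\binom{p(k+1)-1}{p-1}(p^{l-1} - p^{l-2})$, and here I expect to invoke a Wolstenholme-type congruence: $\binom{p(k+1)-1}{p-1} \equiv 1 \pmod{p^{2}}$ for $p \ge 5$ (this is essentially the generalization $\binom{ap}{bp} \equiv \binom{a}{b} \pmod{p^{3}}$ of Wolstenholme, applied suitably, or Ljunggren/Jacobsthal). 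Similarly for $j \ge 2$ one gets even stronger divisibility of $\binom{p^{j}(k+1)-1}{p^{j}-1} - 1$ by powers of $p$. The plan is to show that for each $j \ge 1$, writing $\binom{p^{j}(k+1)-1}{p^{j}-1} = 1 + p^{2}c_{j}$ (with $c_{j}$ having extra $p$-divisibility for larger $j$), the telescoping cancellation in $\sum_{j} (1 + p^{2}c_{j})(p^{l-j} - p^{l-j-1})$ collapses the "$1$" parts to $p^{l} - 1 \cdot$(boundary) and the error terms are absorbed mod $p^{l+2}$. One must handle the $j = l$ boundary term $\binom{p^{l}(k+1)-1}{p^{l}-1}$ carefully since it is multiplied by $\varphi(1) = 1$, not by a factor of $p^{l-j}$; this is where the base congruence needs its full strength.

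For the sharper statement with $k = 1$ and $p$ Wolstenholme, I would use that $\binom{2p-1}{p-1} \equiv 1 \pmod{p^{4}}$ — in fact the stronger $\binom{2p^{j}}{p^{j}} \equiv \binom{2}{1} \pmod{p^{3j+?}}$-type lifting, or more simply that being Wolstenholme upgrades the $j$-th binomial congruence by one extra power of $p$ at each relevant level — so that the error terms $c_{j}$ gain one more factor of $p$, pushing the final congruence to $p^{l+3}$ after dividing by $p^{l}$. Inducting on $l$ is an alternative organizing principle: express $|\tilde{\Lambda}_{p^{l}}^{k}|$ in terms of $|\tilde{\Lambda}_{p^{l-1}}^{k}|$ plus the new $j = 0$ contribution, reducing to a statement about a single binomial coefficient modulo a fixed power of $p$.

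\textbf{Main obstacle.} The delicate point is the boundary term $j = l$: because it carries no compensating factor $\varphi(p^{l-j})$, the congruence $\binom{p^{l}(k+1) - 1}{p^{l} - 1} \equiv 1 \pmod{p^{2}}$ (resp. $\pmod{p^{3}}$) must hold at that exact modulus uniformly in $l$, and lifting Wolstenholme-type congruences from $p$ to prime powers $p^{l}$ without losing or gaining the wrong power of $p$ is exactly where care is needed. I expect the cleanest route is to establish, once and for all, a lemma of the form $\binom{p^{l}m - 1}{p^{l} - 1} \equiv \binom{p^{l-1}m - 1}{p^{l-1} - 1} \pmod{p^{l+2}}$ for $p \ge 5$ (with an extra power of $p$ in the Wolstenholme case), and then feed it into the divisor sum; verifying that lemma — presumably via Andrew Granville's or McIntosh's refinements of the Wolstenholme–Ljunggren congruences — is the technical heart of the argument.
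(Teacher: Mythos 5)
Your plan follows essentially the same route as the paper: specialize the divisor-sum formula to $n=p^{l}$, telescope against $\varphi(p^{l-i})=p^{l-i}-p^{l-i-1}$ so that only the consecutive differences $\binom{k'p^{i}-1}{p^{i}-1}-\binom{k'p^{i-1}-1}{p^{i-1}-1}$ appear with weight $p^{l-i}$, and then quote a Wolstenholme--Ljunggren--Jacobsthal congruence for those differences (the paper cites Eq.~(39) of Me\v{s}trovi\'c's survey, which gives divisibility by $p^{3i}$, slightly stronger than the $p^{i+2}$ your proposed lemma asks for). The one point to watch is that the telescoping is genuinely needed --- writing each binomial as $1+p^{2}c_{j}$ and summing directly falls one power of $p$ short at $j=1$, since $\binom{pk'-1}{p-1}\equiv 1$ only modulo $p^{3}$ --- and that the full $p^{3i}$ strength handles all levels $i\ge 2$ unconditionally in the Wolstenholme case, so only the $i=1$ term requires the hypothesis $\binom{2p-1}{p-1}\equiv 1\pmod{p^{4}}$.
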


\begin{proof}
Let $k' := k + 1$. From~\cite[Remark~3.3]{https://doi.org/10.48550/arxiv.2203.14422} we have
\begin{align*}
p^{l} | \tilde{\Lambda}_{p^{l}}^{k} | 
&= \sum_{d \mid p^{l}} \binom{k' d - 1}{d - 1} \varphi(\frac{p^{l}}{d}) \\ 
&= \sum_{i = 0}^{l} \binom{k' p^{i} - 1}{p^{i} - 1} \varphi(p^{l - i}) \\ 
&= p^{l} - p^{l - 1} + \sum_{i = 1}^{l - 1} \binom{k' p^{i} - 1}{p^{i} - 1} (p^{l - i} - p^{l - i - 1}) + \binom{k' p^{l} - 1}{p^{l} - 1} \\ 
&= p^{l} - p^{l - 1} + \sum_{i = 1}^{l - 1} \binom{k' p^{i} - 1}{p^{i} - 1} p^{l - i} - \sum_{i = 2}^{l} \binom{k' p^{i - 1} - 1}{p^{i - 1} - 1} p^{l - i} + \binom{k' p^{l} - 1}{p^{l} - 1} \\ 
&= p^{l} + \sum_{i = 1}^{l} \left\{ \binom{k' p^{i} - 1}{p^{i} - 1} - \binom{k' p^{i - 1} - 1}{p^{i - 1} - 1} \right\} p^{l - i}. 
\end{align*}
Here, from \cite[Eq~(39)]{mestrovic2011wolstenholmes}, there is an  $a_{i} \in \mathbb{Z}$ such that
$$
\binom{k' p^{i} - 1}{p^{i} - 1} - \binom{k' p^{i - 1} - 1}{p^{i - 1} - 1} = p^{3 i} a_{i}
$$
then
$$
| \tilde{\Lambda}_{p^{l}}^{k} | = \frac{1}{p^{l}} \left( p^{l} + \sum_{i = 1}^{l} p^{l - i} p^{3 i} a_{i} \right) = 1 + \sum_{i = 1}^{l} p^{2 i} a_{i}
$$
holds. 
In addition, from
\begin{align*}
p^{l} | \tilde{\Lambda}_{p^{l}}^{1} |  = p^{l} + \left( \binom{2 p - 1}{p - 1} - 1 \right) p^{l - 1} + \sum_{i = 2}^{l} p^{l + 2 i} a_{i}, 
\end{align*}
if $p$ is a Wolstenholme prime, then $| \tilde{\Lambda}_{p^{l}}^{1} | \equiv 1 \pmod{p^{3}}$ holds.
\end{proof}

Note that by the equality \eqref{eq:NGDL}, this proves Theorem~\ref{cor:4} and one of the implications in
Theorem~\ref{thm:1}. 

\begin{proof}[Proof of Theorem~$\ref{thm:1}$]
From Proposition~\ref{prop:3}, it is enough to prove that if the prime $p$ satisfies ${\rm N}(\Theta({\rm C}_{p})) \equiv 1 \pmod{p^{3}}$, then 
$p$ is a Wolstenholme prime.  For $p = 2, 3$, we have ${\rm N}(\Theta({\rm C}_{p})) = 2, 4$, therefore we assume $p \geq 5$. We have
\begin{align*}
1 \equiv {\rm N}(\Theta({\rm C}_{p})) \equiv \frac{1}{p} \left( p - 1 + \binom{2 p - 1}{p - 1} \right) \pmod{p^{3}}
\end{align*}
and we conclude that $p$ is Wolstenholme prime. 
\end{proof}

\begin{rem}
Theorem~$\ref{cor:4}$ is a generalization of the congruence
\begin{align*}
{\rm N}(\Theta({\rm C}_{p})) \equiv 1 \pmod{p} 
\end{align*}
obtained in ~\cite{MR4593070}.
\end{rem}

\section{Possible generalizations and numerical computations}

In view of Proposition \ref{prop:3} and inequality \eqref{eq:NGDI} it is natural to ask if for a prime number $p\geq 5$ the ``Wolstenholme type'' congruence relation
\begin{align}
  \label{eq:gdcong2}
  {\rm N}(\Theta({\rm C}_{p^k})^{l}) \equiv 1 \pmod{p^{2}}
\end{align}
holds for $k, l \geq 1$. 
Regarding this situation, we first reproduce the questions posed in \cite{MR4593070}.

\begin{question}[{\cite[Question~3.1, 3.2]{MR4593070}}]
\label{question}
Are the following statements true? 
\begin{enumerate}
\item[1.] For any positive integer $k$, the equality ${\rm N} \left( \Theta({\rm C}_{n})^{k} \right) = | \tilde{\Lambda}_{n}^{k} |$ holds if and only if $n$ is a prime power. 
\item[2.] For any positive integers $n$ and $k$, the equality ${\rm N} \left( \Theta({\rm C}_{n})^{k} \right) \equiv | \tilde{\Lambda}_{n}^{k} | \pmod{n}$ holds. 
\item[3.] For any group $G$ of order $n$, 
${\rm N} \left( \Theta({\rm C}_{n}) \right) \leq {\rm N} \left( \Theta(G) \right)$ holds. 
\item[4.] If ${\rm N} \left( \Theta(G) \right) = {\rm N} \left( \Theta(G') \right)$ holds for arbitrary groups $G$ and $G'$ of the same order, 
then $G$ is isomorphic to $G'$ as group. 
\item[5.] For any abelian group $G$ and any non abelian group $G'$ of the same order, 
\[ 
{\rm N} \left( \Theta(G) \right) < {\rm N} \left( \Theta(G') \right)
\]
holds. 
\end{enumerate}
\end{question}

By Proposition \ref{prop:3}, Question \ref{question}.1 clarifies the congruence \eqref{eq:gdcong2} for $p\geq 5$. To give some insight for the validity of the posed questions, we computed  ${\rm N}(\Theta({\rm C}_{n})^{k})$ for cyclic groups with different values of $n,k$. 

The results are shown in Tables \ref{tab:terms2}, \ref{tab:groupdet1} and \ref{tab:partition1}, where we also give the cardinality of $\tilde{\Lambda}_{n}^{k}$ for reference. As we can observe from the results, Questions \ref{question}.1--2 holds in all the computed cases. 
It would also be interesting to explore the congruence properties of the number of terms of the group determinant of other groups (e.g. abelian groups). 

\begin{table}[h]
    \begin{subtable}[h]{0.45\textwidth}
        \centering
        \begin{tabular}{c | l  l  l}
        $n \symbol{92} k$& 1 & 2  \\
        \hline \hline
         10 & 7492 & 996483   \\ 
         11 & 32066 & 5864750    \\
         12 & 86500 & 34724470  \\
      13 & 400024 & 208267320    \\
      14 & 1366500 & 1258462082  
       \end{tabular}
       \caption{$\Theta({\rm C}_{n})^{k}$.  }
       \label{tab:week1}
    \end{subtable}
    \hfill
    \begin{subtable}[h]{0.45\textwidth}
        \centering
        \begin{tabular}{c | l l l}
        $n \symbol{92} k$& 1 & 2  \\
        \hline \hline
         10 & 9252 & 1001603   \\ 
         11 & 32066 & 5864750   \\
         12 & 112720 & 34769374   \\
      13 & 400024 & 208267320   \\
      14 & 1432860 & 1258579654 
        \end{tabular}
        \caption{$\tilde{\Lambda}_{n}^{k}$.  }
        \label{tab:week2}
     \end{subtable}
     \caption{Number of terms and cardinal number.  }
     \label{tab:terms2}
\end{table}

To verify Questions \ref{question}.3--5, we computed ${\rm N}(\Theta(G))$ for all the groups of order $16$. The number of terms of the group determinant are shown in Table \ref{tab:groupdet3} where we also include the group code in the GAP computer algebra system \cite{GAP4}.  In this case, we also verify the positive answer for all posed questions. For groups of smaller order we refer the reader to \cite{MR4593070}.

\begin{table}[ht]
    \centering
    \begin{tabular}{c | c|l }
         GAP & $G$ & ${\rm N}(\Theta(G))$  \\
      \hline
      \hline
       1  & ${\rm C}_{16}$ & 18784170 \\ 
       5  & ${\rm C}_{8} \times {\rm C}_{2}$ & 18784979 \\ 
       2  & ${\rm C}_{4} \times {\rm C}_{4}$ & 18784995 \\ 
       10  & ${\rm C}_{4} \times {\rm C}_{2}^{2}$ & 18786595 \\ 
       14  & ${\rm C}_{2}^{4}$ & 18789795 \\ 
       11  & ${\rm D}_{8} \times {\rm C}_{2}$ & 36768531 \\ 
       13 & ${\rm Q}_{8} \rtimes {\rm C}_{2}$   & 36808747 \\ 
       3  & ${\rm C}_{2}^{2} \rtimes {\rm C}_{4}$ & 36811299 \\ 
       4  & ${\rm C}_{4} \rtimes {\rm C}_{4}$ & 36819043 \\ 
       6  & ${\rm C}_{8} \rtimes_{5} {\rm C}_{2}$  & 36842795 \\ 
       12  & ${\rm Q}_{8} \times {\rm C}_{2}$ & 36855987 \\ 
       8  & ${\rm C}_{8} \rtimes_{3} {\rm C}_{2}$ & 73395796 \\ 
       9  & ${\rm Q}_{16}$ & 73432499 \\ 
       7  & ${\rm D}_{16}$ & 73455914 
    \end{tabular}
    \caption{Number of terms in $\Theta(G)$ for groups of order $16$.  }
    \label{tab:groupdet3}
\end{table}

In Table \ref{tab:groupdet3}, ${\rm D}_n$ is the dihedral group and ${\rm Q}_n$ is the (generalized) quaternionic group of order $n$, respectively. In addition, we have
\begin{align*}
{\rm C}_{2}^{2} \rtimes {\rm C}_{4} &:= \langle g_{1}, g_{2}, g_{3} \mid g_{1}^{2} = g_{2}^{2} = g_{3}^{4} = e, \: g_{2} g_{1} = g_{1} g_{2}, \: g_{3} g_{1} = g_{1} g_{3}, \: g_{3} g_{2} = g_{1} g_{2} g_{3}  \rangle, \\
{\rm C}_{4} \rtimes {\rm C}_{4} &:= \langle g_{1}, g_{2} \mid g_{1}^{4} = g_{2}^{4} = e, \: g_{2} g_{1} = g_{1}^{3} g_{2} \rangle, \\
{\rm C}_{8} \rtimes_{5} {\rm C}_{2} &:= \langle g_{1}, g_{2} \mid g_{1}^{8} = g_{2}^{2} = e, \: g_{2} g_{1} = g_{1}^{5} g_{2} \rangle, \\ 
{\rm C}_{8} \rtimes_{3} {\rm C}_{2} 
&:= \langle g_{1}, g_{2} \mid g_{1}^{8} = g_{2}^{2} = e, \: g_{2} g_{1} = g_{1}^{3} g_{2} \rangle, \\ 
{\rm Q}_{8} \rtimes {\rm C}_{2} 
&:= \langle g_{1}, g_{2}, g_{3} \mid g_{1}^{4} = g_{3}^{2} = e, \: g_{1}^{2} = g_{2}^{2}, \: g_{2} g_{1} = g_{1} g_{2}, \: g_{3} g_{2} = g_{2} g_{3}, \: g_{3} g_{1} = g_{1}^{3} g_{3} \rangle. 
\end{align*}
Here, $e$ denotes the unit element of each group. We refer to \cite{Wild2005} for the extended discussion on the structure of the groups of order $16$.


\section*{Acknowledgements}

The computations were done in an Intel Xeon E7-4809 2Ghz with 1TB memory using SAGE-Math. CRB would like to thank Hideki Sakurada (NTT CS Labs) for the technical support for the computations.

\begin{landscape}
\begin{table}[ht]
    \centering
    \begin{tabular}{c|lllllllllll}
         $n \symbol{92} k$& 1 & 2 & 3 & 4 & 5 & 6 & 7 & 8 & 9 & 10 \\
      \hline
      \hline
         1 & 1  &  1 & 1  & 1  & 1  & 1  & 1  & 1  & 1 & 1  \\
         2 & 2 & 3 & 4 & 5 & 6 & 7 & 8 & 9 & 10 & 11 \\ 
         3 & 4 & 10 & 19 & 31 & 46 & 64 & 85 & 109 & 136 & 166 \\
         4 & 10 & 43 & 116 & 245 & 446 & 735 & 1128 & 1641 & 2290 & 3091 \\
         5 & 26 & 201 & 776 & 2126 & 4751 & 9276 & 16451 & 27151 & 42376 & 63251 \\
         6 &  68 & 984 & 5566 & 19751 & 53994 & 124900 & 255614 & 478305 & 834454 & 1376666 \\
         7 & 246 & 5538 & 42288 & 192130 & 642342 & 1753074 & 4141383 & 8782075 & 17125354 & 31231278 \\
         8 & 810 & 30667 & 328756 & 1922741 & 7861662 & 25366335 & 69159400 & 166237161 & 362345362 & 730421043 \\
         9 & 2704 & 173593 &  2615104 & 19692535 & 98480332 & 375677659 & 1182125128 & * & * & *
    \end{tabular}
    \caption{Number of terms in  $\Theta({\rm C}_{n})^{k}$. Here, the symbol $*$ indicates that the number of terms was not computed.}
    \label{tab:groupdet1} 
\vspace{1cm}
    \centering
    \begin{tabular}{c|lllllllllll}
         $n \symbol{92} k$& 1 & 2 & 3 & 4 & 5 & 6 & 7 & 8 & 9 & 10 \\
      \hline
      \hline
         1 & 1  &  1 & 1  & 1  & 1  & 1  & 1  & 1  & 1 & 1  \\
         2 & 2 & 3 & 4 & 5 & 6 & 7 & 8 & 9 & 10 & 11 \\ 
         3 & 4 & 10 & 19 & 31 & 46 & 64 & 85 & 109 & 136 & 166 \\
         4 & 10 & 43 & 116 & 245 & 446 & 735 & 1128 & 1641 & 2290 & 3091 \\
         5 & 26 & 201 & 776 & 2126 & 4751 & 9276 & 16451 & 27151 & 42376 & 63251 \\
         6 &  80 & 1038 & 5620 & 19811 & 54132 & 124936 & 255704 & 478341 & 834472 & 1376738 \\
         7 & 246 & 5538 & 42288 & 192130 & 642342 & 1753074 & 4141383 & 8782075 & 17125354 & 31231278 \\
         8 & 810 & 30667 & 328756 & 1922741 & 7861662 & 25366335 & 69159400 & 166237161 & 362345362 & 730421043 \\ 
         9 & 2704 & 173593 & 2615104 & 19692535 & 98480332&375677659 & 1182125128&3220837534 & 7847250409 & 17485161178 \\ 
    \end{tabular}
    \caption{Cardinal number of $\tilde{\Lambda}_{n}^{k}$.  }
    \label{tab:partition1}
\end{table}
\end{landscape}

\bibliography{reference}
\bibliographystyle{plain}

\smallskip

\flushleft

Cid Reyes-Bustos \par
\textsc{NTT Institute for Fundamental Mathematics, \\NTT Communication Science Laboratories, Nippon Telegraph and Telephone Corporation, \\3-9-11 Midori-cho Musashino-shi, Tokyo, 180-8585, Japan}

\texttt{cid.reyes@ntt.com}

\bigskip

Naoya Yamaguchi\par
\textsc{Faculty of Education, \\ 
University of Miyazaki, \\ 
1-1 Gakuen Kibanadai-nishi, Miyazaki, 889-2192, Japan
}

\texttt{n-yamaguchi@cc.miyazaki-u.ac.jp}

\bigskip

Yuka Yamaguchi\par
\textsc{Faculty of Education, \\ 
University of Miyazaki, \\ 
1-1 Gakuen Kibanadai-nishi, Miyazaki, 889-2192, Japan
}

\texttt{y-yamaguchi@cc.miyazaki-u.ac.jp}

\end{document}